\documentclass[twoside,11pt,reqno]{amsart}

\usepackage{eurosym}
\usepackage{amsmath,amsthm,amscd,amsfonts,amssymb,enumerate}
\usepackage{graphicx,color}
\usepackage{multirow}
\usepackage{mathrsfs, cite, url}
\usepackage[colorlinks=true,allcolors=blue]{hyperref}
\usepackage{comment}

\setcounter{MaxMatrixCols}{10}

\setlength{\oddsidemargin}{10pt} \setlength{\evensidemargin}{10pt}
\setlength{\textwidth}{5.8in}
\newcommand{\sign}{\mathop{\rm sign}}

\newtheorem{theorem}{Theorem}[section]

\newtheorem{corollary}{Corollary}[section]

\newtheorem{example}{Example}[section]
\newtheorem{remark}{Remark}[section]

\pagestyle{myheadings} \thispagestyle{empty}
\numberwithin{equation}{section}

\newcommand{\Ii}[5]{\mathbf{I}^{{#1}}(#2,#3,#4,#5)}
\newcommand{\Inormalized}[5]{\mathbf{J}}

\begin{document}

\title{A family of polylogarithmic integrals}

\author[A. Sofo, J-C. Pain, V. Scharaschkin]{Anthony Sofo, Jean-Christophe Pain$^{\ast}$, Victor Scharaschkin}
\address{Anthony Sofo, {College of Engineering and Science, Victoria
University, Australia}}
\address{Adjunct Professor, {School of Science, RMIT University, Australia}}
\email{anthony.sofo@vu.edu.au}
\address{Jean-Christophe Pain, CEA, DAM, DIF, F-91297 Arpajon, France}
\address{Universit\'e Paris-Saclay, CEA, Laboratoire Mati\`ere en Conditions Extr\^emes,
F-91680, Bruy\`eres-le-Ch\^atel, France}
\email{jean-christophe.pain@cea.fr}
\address{Victor Scharaschkin}
\address{Department of Mathematics. University of Queensland. St Lucia, Australia, 4072}
\email{v.scharaschkin@gmail.com}
\date{August, 2024}
\subjclass[2010]{Primary 11M06, 11M35, 26B15; Secondary 33B15, 42A70, 65B10.}
\keywords{Integrals, Harmonic number, Polylogarithm function, Bernoulli
number}
\thanks{$^{\ast}$ Corresponding author}

\begin{abstract}
In this paper we investigate a class of integrals that were encountered in the course of a work on statistical plasma physics, in the so-called Sommerfeld temperature-expansion of the electronic entropy. We show that such integrals, involving some parameters, can be fully described in closed form represented by special functions.
\end{abstract}


\maketitle


\section{Introduction} 

\bigskip Let $\mathbb{N}$, $\mathbb{R}$, $\mathbb{C}$ denote the sets of natural numbers, real numbers and complex numbers respectively. In this paper we focus on evaluating the integral 
\begin{equation}
\Ii{\pm}{a}{b}{p}{t} :=\int_{-\infty }^{\infty }\,\frac{x^{p}~
\mathrm{Li}_{t}(\pm e^{ax})}{1+e^{bx}}\,\mathrm{d}x,  \label{ONE}
\end{equation}
for $a$, $b\in \mathbb{R}$ with $ab>0$, and $p\in \mathbb{N}\cup\{0\}$.
 
The integral~(\ref{ONE}) was encountered for specific values of the parameters $(a, b, p, t)$ during research in statistical plasma physics, specifically in the context of the Sommerfeld temperature expansion of electronic entropy. This formalism is crucial in calculating the equation of state for dense plasmas. For further details, see~\cite{ARNAU}. In particular, an integral representation
for $\zeta (4)$ was obtained in~\cite{PAIN1}, and a result for $a=b=t=1$ and arbitrary~$p$ was derived in~\cite{KUMAR}, involving harmonic sums. See also~\cite{LIChu} for similar results.

Other cases have been published by Alzer and Choi~\cite{Al-Ch}, and by Sofo and Choi~\cite{SofChoi}. Euler sums, along with the closely related field of multiple zeta values, have recently garnered significant attention in research, as evidenced by studies such as~\cite{Bo-Br-Ka},~\cite{Erdel}, and~\cite{Lewin}. Similarly, various parameterized Euler sums have also been
explored extensively; see, for instance,~\cite{SofoNimb2019},~\cite{SoNimIntTrans}
,~\cite{SriChoi2001}, and~\cite{Sr-Ch-12}.

In this paper we shall extend the results of~\cite{KUMAR} by representing $I^{\pm}(a,b,p,t)$ quite generally in terms of special functions, such as the Riemann zeta function, $\zeta$, and linear harmonic Euler sums.  Our main results are as follows, with the notation defined in \S\ref{Sec-Notation} below.

\begin{theorem}
\label{THM1}  
Let $t \in \mathbb{N}$, $p\in \mathbb{N}\cup\{0\}$ and $a$, $b \in \mathbb{R}$ with $ab>0$.   Define $q = \frac{a}{b}$.  Then, the following identities hold.  If $p+t$ is odd
\begin{equation*}
\frac{\sign(b)}{p!}b^{p+1}\, \Ii{-}{a}{b}{p}{t}
= \frac{(-1)^{p+1}}{q^{p+1}}\eta(p+t+1) \;\; -2\sum\limits_{j=0}^{\lfloor \frac{t}{2} \rfloor } q^{t-2j} \binom{p\!+\!t\!-\!2j}{p} \eta(p+t+1-2j)\,\eta(2j) %
\end{equation*}

\noindent If $p+t$ is even
\begin{align}
\frac{\sign(b)}{p!}b^{p+1} \,\Ii{-}{a}{b}{p}{t}
=& \frac{(-1)^{p+1}}{q^{p+1}}\eta(p+t+1) \nonumber\\
&-2\sum\limits_{j=0}^{\lfloor \frac{t}{2} \rfloor }q^{t-2j} \binom{p\!+\!t\!-\!2j}{p} \eta(p+t+1-2j)\,\eta(2j) \nonumber\\ %
&+2(-1)^{p+1}\Bigl[ \mathbb{S}_{p+1,t}^{+-}(q) -2^{-p}\,\mathbb{S}_{p+1,t}^{+-} (\textstyle \frac{q}{2}) -\eta(p+1)\,\eta(t) \Bigr] %
.\label{INTthm1.4} 
\end{align}
Here $\mathrm{Li}_{t}$ is the
polylogarithm function, $\eta$ is the Dirichlet eta
function, and $\mathbb{S}_{p+1,t}^{+-}(q)$ and $\mathbb{S}_{p+1,t}^{+-}(\frac{q}{2})$
are the linear harmonic Euler sums defined by~(\ref{FlajSal3}) below.  See~\S{\ref{Sec-Notation}} for detailed definitions.
\end{theorem}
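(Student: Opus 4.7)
The plan is to normalize the integral to the case $b=1$ by a change of variables, then split the real line at zero, use convergent series expansions on each half, and handle the divergence of the polylog series on the upper half via Jonqui\`ere's inversion identity. Substituting $y=bx$ in $\Ii{-}{a}{b}{p}{t}$ and tracking the orientation reversal when $b<0$ yields $\sign(b)\,b^{p+1}\,\Ii{-}{a}{b}{p}{t}=J$, where
\[
J := \int_{-\infty}^{\infty}\frac{y^{p}\,\mathrm{Li}_{t}(-e^{qy})}{1+e^{y}}\,dy,\qquad q := a/b > 0.
\]
All further work reduces to evaluating $J$.

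On the lower half $y<0$ the expansions $\mathrm{Li}_{t}(-e^{qy})=\sum_{k\ge 1}(-1)^{k}e^{qky}/k^{t}$ and $(1+e^{y})^{-1}=\sum_{m\ge 0}(-1)^{m}e^{my}$ converge absolutely, so interchanging the orders and using $\int_{-\infty}^{0}y^{p}e^{\alpha y}\,dy = (-1)^{p}p!/\alpha^{p+1}$ produces a double sum whose $m=0$ slice equals $(-1)^{p+1}p!\,\eta(p+t+1)/q^{p+1}$, matching the first term of the theorem. On the upper half $y>0$ the polylog series diverges, so I apply Jonqui\`ere's identity
\[
\mathrm{Li}_{t}(-e^{qy}) + (-1)^{t}\,\mathrm{Li}_{t}(-e^{-qy}) = -\frac{(2\pi i)^{t}}{t!}\,B_{t}\!\left(\tfrac{1}{2}+\tfrac{qy}{2\pi i}\right),
\]
which splits the upper integral into a $\mathrm{Li}_{t}(-e^{-qy})$ piece (whose series now converges) and a Bernoulli-polynomial piece. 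Using $B_{2j+1}(\tfrac12)=0$ together with the standard relation $(2\pi i)^{2j}B_{2j}(\tfrac12) = 2(2j)!\,\eta(2j)$, the polynomial reduces to $-2\sum_{j=0}^{\lfloor t/2\rfloor} q^{t-2j}\,y^{t-2j}\,\eta(2j)/(t-2j)!$, and combining with $\int_{0}^{\infty}y^{n}/(1+e^{y})\,dy = n!\,\eta(n+1)$ yields exactly the second term of the theorem.

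Expanding the remaining $\mathrm{Li}_{t}(-e^{-qy})$ contribution together with $(1+e^{y})^{-1} = \sum_{m\ge 1}(-1)^{m-1}e^{-my}$ on $y>0$ and integrating termwise gives $(-1)^{t}p!\sum_{k,m\ge 1}(-1)^{k+m}/[k^{t}(qk+m)^{p+1}]$, which combines with the $m\ge 1$ part of the lower-half double sum (coefficient $(-1)^{p}$) to produce total coefficient $(-1)^{p}+(-1)^{t}$. The parity dichotomy appears automatically here: when $p+t$ is odd this coefficient vanishes, no further terms appear, and the theorem is proved. When $p+t$ is even the coefficient is $2(-1)^{p}$ and the surviving double sum
\[
S \;=\; \sum_{k,m\ge 1} \frac{(-1)^{k+m}}{k^{t}\,(qk+m)^{p+1}}
\]
must be identified with the Euler sums. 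Splitting the inner alternating $m$-sum into even and odd parts via
\[
\sum_{m\ge 1}\frac{(-1)^{m}}{(qk+m)^{p+1}} = 2^{-p}\sum_{n\ge 1}\frac{1}{(qk/2+n)^{p+1}} - \sum_{m\ge 1}\frac{1}{(qk+m)^{p+1}}
\]
produces the $\mathbb{S}_{p+1,t}^{+-}(q)-2^{-p}\,\mathbb{S}_{p+1,t}^{+-}(q/2)$ structure, while the $-\eta(p+1)\,\eta(t)$ correction accounts for the boundary term built into the definition~(\ref{FlajSal3}) of $\mathbb{S}_{p+1,t}^{+-}$. The main technical obstacle is precisely this final identification: matching $S$ carefully against the authors' definition of $\mathbb{S}_{p+1,t}^{+-}$ and pinning down the $\eta(p+1)\eta(t)$ correction requires careful reindexing of the two partial sums. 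Everything else — the change of variables, Jonqui\`ere's expansion, the Bernoulli--zeta identities, and the odd-parity case — is essentially routine once the integral $J$ is in hand.
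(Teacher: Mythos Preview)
Your approach is correct and structurally parallel to the paper's: both normalize by a change of variable, split the domain into two halves, apply Jonqui\`ere's inversion on the half where the polylog series diverges, and then identify the residual piece with alternating Euler sums. The packaging differs in a useful way. The paper substitutes $y=e^{bx}$ (so the integrand carries $\log^{p}y$), splits at $y=1$, and expresses the surviving integral $\int_0^1 \frac{\log^p y\,\mathrm{Li}_t(-y^q)}{1+y}\,dy$ as the Sofo--Bat{\i}r quantity $\mathbf{K}^{-}(0,p,q,t)$ from~\cite{BatirSofo}, which it then simplifies via the harmonic-number duplication formula~(\ref{eq-doubling}). You instead substitute $y=bx$, split at $y=0$, and expand both factors directly as exponential series; your even/odd split of the inner $m$-sum plays the role of the duplication formula. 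Your route is more self-contained --- it needs neither the external result~(\ref{eq-K}) nor~(\ref{eq-doubling}) --- and the ``main technical obstacle'' you flag actually falls out cleanly: since $H_{\lambda}^{(p+1)}=\zeta(p+1)-\sum_{m\ge 1}(\lambda+m)^{-p-1}$, the $\zeta(p+1)$ built into each generalized harmonic number contributes $(2^{-p}-1)\zeta(p+1)=-\eta(p+1)$ after the even/odd split, and the outer $k$-sum turns this into $-\eta(p+1)\eta(t)$ with no leftover. One caveat: your claim that the lower-half double series can be integrated termwise ``by absolute convergence'' fails when $p=0$, since $\sum_{m\ge 0}(qk+m)^{-1}$ diverges; there you need a slightly more careful justification (e.g., group the $m$-sum in pairs, or apply dominated convergence to partial sums).
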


The results for the complex-valued integral $\Ii{+}{a}{b}{p}{t}$ are analogous and are presented and proven in Theorem~\ref{THM2} below.

\section{Notation and Preliminary Results} \label{Sec-Notation}

In this section, we provide a detailed definition of the terms used in Theorem~\ref{THM1} and review their properties. First, we define various species of linear harmonic Euler sums.  For $n$, $p\in \mathbb{N}$ recall that the $n$th \emph{harmonic number} of \emph{order $p$}, $H_n^{(p)}$ is defined by 
\begin{equation}
H_{n}^{(p)}:=\sum\limits_{j=1}^{n}\frac{1}{j^p} \qquad
 p, n\in \mathbb{N}.
\label{HN-ext}
\end{equation} 
We write $H_n$ for $H_n^{(1)}$, and we set $H_0^{(p)}:=0$.

Euler initiated the study of linear harmonic sums of the form
\begin{equation*}
\mathbb{S}_{p,t}=\sum_{n\geq 1}^{{}}\,\frac{H_{n}^{(p)}}{n^{t}} \qquad t>1,
\end{equation*}
with the sum $p+t$ said to be the \emph{weight} of $\mathbb{S}_{p,t}$.  
Many researchers have continued this work.  See for example~\cite{Al-Ch},~\cite{Borwein},~\cite{FlajSalv} and~\cite{SofINTtrans}.  It is now known that $\mathbb{S}_{p,t} $ can be explicitly evaluated in terms of special functions, such as
the Riemann zeta function, in the following cases: when $p=t$; 
when $p+t$ is odd; and the exceptional cases $(p,t) =(2, 4)$ and $(4, 2)$.
Furthermore, an elegant \emph{shuffle} (reciprocity) relation 
\begin{equation*}
\mathbb{S}_{p,t}+\mathbb{S}_{t,p}=\zeta(p)\zeta(t) +\zeta(p+t)
\end{equation*}
relates $\mathbb{S}_{t,p}$ and $\mathbb{S}_{p,t}$.   Flajolet and Salvy 
\cite{FlajSalv} studied harmonic sums with alternating signs such as
\begin{equation}
\mathbb{S}_{p,t}^{+-} :=\sum_{n=1}^{\infty }\,(-1)^{n+1}\frac{H_{n}^{(p)}}{n^{t}}.  \label{FlajSal2} 
\end{equation}
(Our $\mathbb{S}_{p,t}$ may be denoted $\mathbb{S}_{p,t}^{++}$ in this context.)  We generalize further, by defining for real $r>0$
\begin{equation}
\mathbb{S}_{p,t}(r) :=\sum_{n=1}^{\infty } \frac{H_{rn}^{(p)}}{n^{t}}, \qquad \mathbb{S}_{p,t}^{+-}(r) :=\sum_{n=1}^{\infty }\,(-1)^{n+1}\frac{H_{rn}^{(p)}}{n^{t}} \label{FlajSal3} 
\end{equation}
where the $H_{\lambda}^{(p)}$ denote the \emph{extended harmonic numbers} of \emph{index} 
$\lambda$ (see~\cite{SofoSri}) which are defined quite generally for $\lambda \in \mathbb{C}\setminus \{ \cdots
,-3,-2,-1,0\}$
by 
\begin{equation}
H_{\lambda}^{(p)} :=\begin{cases}
\gamma +\psi (\lambda +1) & p=1, \\[2mm] 
\zeta (p)+\frac{(-1)^{p-1}}{(p-1)!\vphantom{\int^2}}\, \psi^{(p-1)}(\lambda +1) \quad\qquad & p\in \mathbb{N},\;\; p
\geq 2.
\end{cases}
\label{G-HZ}
\end{equation}
Here $\zeta$ is the Riemann zeta function and $\psi$ denotes the
\emph{digamma} (or \emph{psi}) function defined by 
\begin{equation*}
\psi(z) :=\frac{d}{dz}\bigl(\log \Gamma(z)
\bigr) =\frac{\Gamma^{\prime }(z) }{\Gamma(z) }
\qquad z\in \mathbb{C}\setminus \{ \cdots
,-3,-2,-1,0\},  
\end{equation*}
where $\Gamma$ is the usual Gamma function (see, e.g., 
\cite[Section 1.1]{Sr-Ch-12}).   The term $\gamma$ represents the familiar Euler-Mascheroni constant $\gamma = 0.5772\ldots$ (see,
e.g.,~\cite[Section 1.2]{Sr-Ch-12}),  which can also be obtained as $\gamma=-\psi(1)$. Finally, for $k\in \mathbb{N}$ the $k$th \emph{polygamma} function $\psi^{(k)}$ is defined by 
\begin{eqnarray}
\psi^{(k)}(z) &:= &\frac{d^{k}}{dz^{k}}\bigl(\psi (z) \bigr) =(
-1)^{k+1}\, k!\, \sum_{r=0}^{\infty }\frac{1}{(r+z)^{k+1}}
\label{polygamma} \\
&& \qquad k\in \mathbb{N},\,\,z\in \mathbb{C}\setminus \{ \cdots
,-3,-2,-1,0\}.  \notag
\end{eqnarray}
(We also put $\psi^{(0)}=\psi$). 

Polygamma functions satisfy the recurrence relation
\begin{equation}\label{eq-Polygamma}
\psi^{(k)}(z+1) = \psi^{(k)}(z) + (-1)^k\,k!\, z^{-1-k} \qquad k\in \mathbb{N}\cup\{0\}.
\end{equation}
In particular when $k=0$ repeated application of~(\ref{eq-Polygamma}) reveals a close connection of the digamma function with harmonic numbers (see, e.g.,~\cite[Section 1.3]
{Sr-Ch-12}), namely 
\begin{equation}
\psi(z+n)=\psi (z)+\sum_{k=1}^{n}\frac{1}{z+k-1},\qquad n\in \mathbb{N}, \quad z\in \mathbb{C}\setminus \{ \cdots
,-3,-2,-1,0\}.
\label{Psi-a}
\end{equation}
Furthermore, combining~(\ref{eq-Polygamma}) and~(\ref{G-HZ}) we obtain
\begin{equation}\label{eq-difference-harmonic}
H_{\lambda+1}^{(p)} -H_{\lambda}^{(p)} =\frac{1}{(\lambda+1)^p},
\end{equation}
again in accordance with the usual harmonic numbers.  Equations~(\ref{Psi-a}),~(\ref{polygamma}) and the fact that $\gamma=-\psi(1)$ further ensure that if $\lambda \in \mathbb{N}$ then $H_{\lambda}^{(p)}$ reduces to the harmonic number defined by~(\ref{HN-ext}).  In particular~(\ref{FlajSal3}) generalizes~(\ref{FlajSal2}) since
\begin{equation*}
\mathbb{S}_{p,t} =\mathbb{S}_{p,t}(1), \qquad \qquad \mathbb{S}_{p,t}^{+-} =\mathbb{S}_{p,t}^{+-}(1).
\end{equation*}
We also mention the \emph{duplication} (or multiplication) formula
\[\textstyle
\psi^{(p)}(z) +\psi^{(p)}(z+\frac{1}{2})  = 2^{p+1}\psi^{(p)}(2z) \qquad p\geq 1.
\]
Appealing to the recurrence relations, this implies
\begin{equation}\label{eq-doubling}
H_{\lambda-\frac{1}{2}}^{(p)} +H_{\lambda}^{(p)} = 2^p\bigl(H_{2\lambda}^{(p)}-\eta(p)\bigr),
\end{equation}
where the \emph{Dirichlet eta function}, $\eta$, is defined by 
\begin{equation*}
\eta(s):=\sum_{n\geq 1}\,\frac{(-1)^{n+1}}{n^{s}} \qquad\text{for\ } \Re (s)>0,\quad \text{
with }\eta (1)=\log 2.
\end{equation*} 
It satisfies $\eta(s)= (1-2^{1-s})\zeta(s)$ for $s\neq 1$ and thus extends analytically; in particular $\eta(0)=\frac{1}{2}$.  We also have \cite[Section 2.3, p. 172, Eq. (49)]{Sr-Ch-12}:
\begin{equation}\label{eq-eta-integral}
\eta(s) = \frac{1}{\Gamma(s)}\int_0^{\infty} \frac{x^{s-1}}{1+e^x}\,\mathrm{d}x,
\end{equation}
which gives the Dirichlet eta function as a Mellin transform.
The \emph{polylogarithm
function} $\mathrm{Li}_{m}$ of \emph{order} $m$ is defined by (see, e.g.,~\cite[p. 198]{Sr-Ch-12}):
\begin{equation}
\aligned\mathrm{Li}_{m}(z)=\sum_{k=1}^{\infty }\frac{z^{k}}{k^{m}}\qquad
|z|\leq 1.
\endaligned  \label{polog1.0}
\end{equation}
Note that $\mathrm{Li}_{1}(z)=-\log(1-z)$. The polylogarithm function satisfies an important functional equation~(\ref{eq-Jonq-positive}) below.  To state it, recall the \emph{Bernoulli numbers} defined by the generating function
\begin{equation*}
\frac{z~}{e^{z}-1}=\sum_{j\geq 0}^{{}}\frac{z^{j}}{j!}B_{j},\qquad \text{ where }
\vert z\vert <2\pi ,
\end{equation*}
so that 
\begin{equation*}
B_{0}=1,\ B_{2n+1}=0,~n\in 
\mathbb{N}
,\text{ with }B_{1}=\textstyle-\frac{1}{2}.
\end{equation*}
More generally the \emph{Bernoulli polynomials} $\mathbf{B}_{j}(t)$ are defined by the generating function
\begin{equation*}
\frac{z\,e^{tz}}{e^{z}-1}=\sum_{j\geq 0}\frac{z^{j}}{j!}\, \mathbf{B}_{j}(
t),\quad \text{ where }\vert z\vert <2\pi,
\end{equation*}
and have the representation (see, e.g.,~\cite[Section 1.7]{SriChoi2001}):
\begin{equation*}
\mathbf{B}_{m}(t) =\sum_{j=0}^{m}\binom{m}{\,j\,} B_{j}\,t^{m-j} =\sum_{j=0}^{m}\binom{m}{\,j\,} B_{m-j}\,t^{j}. 
\end{equation*}
In particular, $B_{m}=\mathbf{B}_{m}(0)$. 

The following important identity is called \emph{Jonqui\`{e}re's relation} 
\cite{Jonqui}. In its general form it provides the analytic continuation of $\mathrm{Li}_{s}(z)$ in~(\ref{polog1.0}) outside its circle of convergence $|z|=1$. See Lewin\footnote{Note that Lewin's $B_n$ is $(-1)^{n+1}B_{2n}$ in our notation.}~\cite[p.~299 (4), (6)]{Lewin}, and also, e.g.,~\cite[pp. 30--31]{Erdel} and \cite[pp. 197--198]{Sr-Ch-12}).\footnote{The spelling ``Joncqui\`{e}re's relation'' also occurs in the literature; see Erd\'{e}lyi et al.~\cite{Erdel}, p. 31.}  In particular for real~$x$ with $0<x<1$ and $m\in \mathbb{N}$ Jonqui\`{e}re's relation gives
\begin{eqnarray}
\mathrm{Li}_{m}(x)+ (-1)^{m} \,\mathrm{Li}_{m} \bigl(\textstyle \frac{1}{x}\bigr) &\!\!\!=\!\!\!& %
-\frac{(2\pi i)^{m}}{m!} \mathbf{B}_{m} \Bigl(\frac{\log x}{2\pi i} \Bigr) \\ %
&\!\!\!=\!\!\!&-\frac{1}{m!} \sum\limits_{j=0}^{m} \binom{m}{\, j \,} B_{j} (2\pi i)^{j} \log^{m -j}(x) \notag \\ 
&\!\!\!=\!\!\!&  \phantom{-}2\sum\limits_{j=0}^{\lfloor \frac{m}{2} \rfloor} \frac{\log^{m-2j}(x)}{(m-2j)!} \zeta(2j) \;\;\; %
+i\,\pi\frac{\log^{m-1}(x)}{(m-1)!},\nonumber\\\label{eq-Jonq-positive} \\ 
\text{\hspace{-3mm} and}\quad \mathrm{Li}_{m}(-x)+(-1)^{m} \,\mathrm{Li}_{m} \bigl(\textstyle -\frac{1}{x} \displaystyle 
\bigr) &\!\!\!=\!\!\!& -2\sum\limits_{j=0}^{\lfloor \frac{m}{2} \rfloor } \frac{\log ^{m-2j}(x)}{(m-2j)!} \,\eta(2j).
\label{eq-Jonq-negative}
\end{eqnarray}

\section{The Main Integral with Parameters}

We now prove Theorem~\ref{THM1}.  The main ingredients are Jonqui\`{e}re's relation~(\ref{eq-Jonq-negative}), and the following results of Sofo and Bat\i r~\cite[Thm.~2.1, Thm.~3.5]{BatirSofo}.  They proved: for real $c\geq -2$ and $q>0$, and for $p$,~$t \in \mathbb{N}\cup\{0\}$
\begin{equation} \label{eq-K}
\mathbf{K}^{\pm}(c,p,q,t) := \!\int_0^1 \!\frac{x^{c} \log^p x\;\; \mathrm{Li}_t(\pm\, x^q)}{1+x} \,\mathrm{d}x = \frac{(-1)^p\, p!}{2^{p+1}} \sum_{n=1}^{\infty} \frac{(\pm 1)^{n}}{n^t} \Bigl[H^{(p+1)}_{\frac{qn +{c}}{2}} -H^{(p+1)}_{\frac{qn +{c}-1}{2}} \Bigr]. 
\end{equation}
 
\begin{proof}[Proof of Theorem~\ref{THM1}]
Starting from the integral $\Ii{-}{a}{b}{p}{t} $ of~(\ref{ONE}) and using the substitution $y=e^{bx}$, we have 
\begin{equation*}
\Ii{-}{a}{b}{p}{t} =\int_{-\infty }^{\infty }\,\frac{x^{p}~\mathrm{Li}_{t}(-e^{ax})}{1+e^{bx}}\,
\mathrm{d}x= \frac{\sign(b)}{b^{p+1}}\int_{0}^{\infty }\,\frac{\log ^{p}y~\mathrm{Li
}_{t}(-y^{q})}{y(1+y) }\,\mathrm{d}y.
\end{equation*}
We split the integral on the right into $\int_0^1 + \int_1^{\infty}$.
In the second integral we utilize the substitution $y=1/z$ (then rename $z=y$).  Multiplying through by $\sign(b)\,b^{p+1}$ and writing $\Inormalized{-}{a}{b}{p}{t} = \sign(b)\,b^{p+1}\, \Ii{-}{a}{b}{p}{t}$ this gives
\[
\Inormalized{-}{a}{b}{p}{t} =
\int_{0}^{1}\, \frac{\log^{p}y~\mathrm{Li}_{t}(-y^{q})}{y(1+y)}\,\mathrm{d}
y \;\;+(-1)^{p}\int_{0}^{1}\,\frac{\log^{p}y~\mathrm{Li}_{t}(-\frac{1}{y^{q}})}{1+y}\,\mathrm{d}y.
\]
On writing $\frac{1}{y(1+y)} = \frac{1}{y} +\frac{\;(-1)^{p+t}}{1+y} -\frac{1+(-1)^{p+t}}{1+y}$ we obtain
\begin{multline*}
\!\!\Inormalized{-}{a}{b}{p}{t}
=\! \int_{0}^{1}\frac{\log ^{p}y~\mathrm{Li}
_{t}(-y^{q})}{y}\,\mathrm{d}y
\;+\;(-1)^{p+t}\!\!\int_{0}^{1}\frac{\log^{p}y\, \bigl[\mathrm{Li}_{t}(-y^{q}) +(-1)^{t}\,\mathrm{Li}_{t}(
\frac{-1}{\;y^{q}})\bigr]}{1+y}\,\mathrm{d}y \\ 
\;\;-\bigl(1 +(-1)^{p+t}\,\bigr)\int_{0}^{1}\frac{\log^{p}y~\mathrm{Li}
_{t}(-y^{q})}{1+y}\,\mathrm{d}y.
\end{multline*}
We now apply Jonqui\`{e}re's relation~(\ref{eq-Jonq-negative}), obtaining
\begin{multline}
\Inormalized{-}{a}{b}{p}{t} =  \int_{0}^{1}\,\frac{\log ^{p}y~\mathrm{Li}_{t}(-y^{q})}{y}\,\mathrm{d}y
\;-\;\bigl(1+(-1) ^{p+t}\,\bigr) \int_{0}^{1}\,\frac{\log ^{p}y~\mathrm{Li}_{t}(-y^{q})}{1+y}\,\mathrm{d}y \\
-2(-1) ^{p+t}\sum\limits_{j=0}^{\lfloor \frac{t}{2} \rfloor }
\frac{q^{t-2j}}{(t-2j) !}\, \eta (2j)
\int_{0}^{1}\,\frac{\log ^{p+t-2j}y~}{1+y}\mathrm{d}y. \label{eq-StorySoFar}
\end{multline}
We deal with each of the integrals in turn.  The integrals in the sum are well known; putting $y=e^{-x}$ in the integral definition~(\ref{eq-eta-integral}) of $\eta$ yields
\begin{equation}\label{eq-logint}
\int_{0}^{1}\,\frac{\log^{m}y~}{1+y}\mathrm{d}y = (-1)^m\, m!\,\eta(m+1).
\end{equation}
The first integral in~(\ref{eq-StorySoFar}) is found by putting ${c}=-1$ and ${c}=0$ in~(\ref{eq-K}) and applying~(\ref{eq-difference-harmonic}):
\begin{equation}\label{eq-first-integral}
\int_{0}^{1} \frac{\log^p x\;\; \mathrm{Li}_t(-x^q)}{x} \,\mathrm{d}x \;=\; \mathbf{K}^{-}(0,p,q,t) +\mathbf{K}^{-}(-1,p,q,t) \;=\; \frac{(-1) ^{p+1}p!}{q^{p+1}}\mathrm{\eta }(p+t+1).
\end{equation}
The remaining (middle) integral in~(\ref{eq-StorySoFar}) is just $\mathbf{K}^{-}(0,p,q,t)$ in~(\ref{eq-K}).  We can simplify it as follows:
\[\renewcommand{\arraystretch}{2.4}\begin{array}{lcl}
\mathbf{K}^{-}(0,p,q,t) &=& \displaystyle \int_0^1 \!\frac{\log^p x\;\; \mathrm{Li}_t(-x^q)}{1+x} \,\mathrm{d}x \\
&\stackrel{(\ref{eq-K})}{=}& \displaystyle \frac{(-1)^{p}\,p!}{2^{p+1}} \sum_{n=1}^{\infty} \frac{(-1)^{n+1}}{n^t} \Bigl[H^{(p+1)}_{\frac{qn-1}{2}} -H^{(p+1)}_{\frac{qn}{2}}\Bigr]  \\   
&\stackrel{(\ref{eq-doubling})}{=} & \displaystyle\frac{(-1)^{p}\,p!}{2^{p+1}}   \sum_{n=1}^{\infty} \frac{(-1)^{n+1}}{n^t} \Bigl[2^{p+1}\bigl(H_{qn}^{(p+1)}-\eta(p+1)\bigr) -2H^{(p+1)}_{\frac{qn}{2}}\Bigr]. \\  
\end{array}
\]
Utilizing the notation~(\ref{FlajSal3}) we can write
\begin{align}
-\bigl(1+(-1)^{p+t}\bigr) \mathbf{K}^{-}(0,p,q,t) =&-\bigl((-1)^p+(-1)^t\bigr)\,p!\nonumber\\
&\;\;\;\;\;\;\times\Bigl[\mathbb{S}_{p+1,t}^{+-}(q) -2^{-p}\,\mathbb{S}_{p+1,t}^{+-}(\textstyle \frac{q}{2}) 
-\eta(p+1)\,\eta(t) \Bigr].\quad \label{eq-Kint}
\end{align}
Substituting~(\ref{eq-logint}),~(\ref{eq-first-integral}) and~(\ref{eq-Kint}) into~(\ref{eq-StorySoFar}) we obtain:
\begin{multline*}
\frac{\sign(b)}{p!}\,b^{p+1}\!\! \int_{-\infty }^{\infty }\!\frac{x^{p}~\mathrm{Li}_{t}(-e^{ax})}{1+e^{bx}}\,\mathrm{d}x = 
-2\sum\limits_{j=0}^{\lfloor \frac{t}{2} \rfloor }q^{t-2j}\binom{p\!+\!t\!-\!2j}{p} \eta (p + t + 1 -2j)\,\eta(2j) \\ +\frac{(-1)^{p+1}}{q^{p+1}}\eta
(p+t+1) 
-\bigl((-1)^p+(-1)^t\bigr)\Bigl[\mathbb{S}_{p+1,t}^{+-}(q) -2^{-p}\,\mathbb{S}_{p+1,t}^{+-}(\textstyle \frac{q}{2}) -\eta(p+1)\,\eta(t) \Bigr].
\end{multline*}
This completes the proof.
\end{proof}

The special cases $p=0$ and $p=t$ are highlighted in the next corollary.

\begin{corollary}
If $p=0$ and $t$ is an odd integer:
\begin{equation*}
|b|\; \Ii{-}{a}{b}{0}{t}  =|b| \!\!\int_{-\infty }^{\infty }\frac{~\mathrm{Li}
_{t}(-e^{ax})}{1+e^{bx}}\,\mathrm{d}x  
= -\frac{1}{q}\eta(t+1) -2\sum\limits_{j=0}^{\frac{t-1}{2}
} q^{t-2j}\, \eta(2j)\, \eta(t+1-2j).
\end{equation*}
If $p=0$ and $t$ is an even integer:
\begin{multline*}
|b|\; \Ii{-}{a}{b}{0}{t}  = |b|\!\! \int_{-\infty }^{\infty }\frac{~\mathrm{Li}_{t}(-e^{ax})}{1+e^{bx}}\, \mathrm{d}x  
=-\frac{1}{q}\eta (t+1)   -2\sum\limits_{j=0}^{\frac{t}{2}} q^{t-2j}\,\eta(2j)\, \eta(t+1-2j)  \\ 
+2\,\eta(t) \log 2 \;-2\,\mathbb{S}_{1,t}^{+-}(q) \;+2\,\mathbb{S}_{1,t}^{+-} \bigl( \textstyle\frac{q}{2}).  
\end{multline*}

For $p=t$ and $q=1$,
\begin{multline*}
\frac{(-a)^{t+1}}{t!} \Ii{-}{a}{a}{t}{t} = \eta(2t+1)-2\eta(t+1)\,\eta(t)  \;\;+2(-1)^t\sum_{j=0}^{\lfloor \frac{t}{2}\rfloor} \binom{2t\!-\!2j}{t}\eta(2t+1-2j)\,\eta(2j) \\+2\,\mathbb{S}^{+-}_{t+1,t} \; -2^{1-t}\,\mathbb{S}^{+-}_{t+1,t}(\textstyle \frac{1}{2}).
\end{multline*}
\end{corollary}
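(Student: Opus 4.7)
The plan is to derive all three identities of the corollary as direct specializations of Theorem~\ref{THM1}, so no new machinery is needed. In each case the only work is to substitute the prescribed values of $p$, $t$, $q$ into one of the two formulas of Theorem~\ref{THM1} and simplify. The three cases correspond to: the first formula of Theorem~\ref{THM1} (when $p+t$ is odd) for case~1, and the second formula of Theorem~\ref{THM1} (when $p+t$ is even) for cases~2 and~3.

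For the first two cases ($p=0$), first I would note that the LHS prefactor simplifies: $\frac{\sign(b)}{0!}\,b^{1} = \sign(b)\cdot b = |b|$. Inside the sum, $\binom{p+t-2j}{p} = \binom{t-2j}{0} = 1$, and the leading term becomes $\frac{(-1)^1}{q^1}\eta(t+1) = -\frac{1}{q}\eta(t+1)$. For $t$ odd (so $p+t$ odd), the first formula of Theorem~\ref{THM1} immediately yields the stated identity. For $t$ even (so $p+t$ even), the second formula of Theorem~\ref{THM1} yields an additional contribution
\[
2(-1)^{p+1}\Bigl[\mathbb{S}_{1,t}^{+-}(q) -\mathbb{S}_{1,t}^{+-}(\textstyle\frac{q}{2}) -\eta(1)\,\eta(t)\Bigr] = -2\mathbb{S}_{1,t}^{+-}(q) + 2\mathbb{S}_{1,t}^{+-}(\textstyle\frac{q}{2}) + 2\eta(t)\log 2,
\]
after using $\eta(1)=\log 2$. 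Rearranging matches the corollary.

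For the third case ($p=t$, $q=1$), we have $p+t = 2t$ even, so the second formula of Theorem~\ref{THM1} applies, and since $q=1$ all powers $q^{t-2j}$ collapse to $1$. The LHS of Theorem~\ref{THM1} reads $\frac{\sign(b)}{t!}\,b^{t+1}\,\Ii{-}{a}{a}{t}{t}$ (using $a=b$). To convert this to the corollary's $\frac{(-a)^{t+1}}{t!}\,\Ii{-}{a}{a}{t}{t}$ (assuming $a>0$ as is standard), I would multiply both sides by $(-1)^{t+1}$, which turns $\sign(b)\,b^{t+1} = a^{t+1}$ into $(-a)^{t+1}$. The same factor $(-1)^{t+1}$ then acts on every term of the RHS: it turns $\frac{(-1)^{t+1}}{1}\eta(2t+1)$ into $\eta(2t+1)$; it turns the sum with coefficient $-2$ into $2(-1)^t\sum\binom{2t-2j}{t}\eta(2t+1-2j)\eta(2j)$; and it turns $2(-1)^{t+1}[\mathbb{S}^{+-}_{t+1,t} - 2^{-t}\mathbb{S}^{+-}_{t+1,t}(\tfrac{1}{2}) - \eta(t+1)\eta(t)]$ into $2\mathbb{S}^{+-}_{t+1,t} - 2^{1-t}\mathbb{S}^{+-}_{t+1,t}(\tfrac{1}{2}) - 2\eta(t+1)\eta(t)$. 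Collecting these gives exactly the corollary's formula.

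No significant obstacle is expected: the argument is pure bookkeeping with binomial coefficients, signs, and the value $\eta(1)=\log 2$. The only minor care-point is tracking the sign factor $\sign(b)\,b^{t+1}$ versus $(-a)^{t+1}$ in case~3, which is resolved by the clean $(-1)^{t+1}$ rescaling described above.
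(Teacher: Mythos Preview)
Your proposal is correct and matches the paper's approach exactly: the paper's proof is simply ``This follows directly from Theorem~\ref{THM1},'' and you have correctly filled in the bookkeeping for each of the three specializations, including the sign tracking and the use of $\eta(1)=\log 2$.
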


\begin{proof}
This follows directly from Theorem~\ref{THM1}.
\end{proof}

\begin{remark}
With $a=b=1$
\begin{equation*}
\Ii{-}{1}{1}{0}{t} =\int_{-\infty }^{\infty }\,\frac{~\mathrm{Li}
_{t}(-e^{x})}{1+e^{x}}\,\mathrm{d}x=\int_{0}^{\infty }\,\frac{~\mathrm{Li}
_{t}(-y)}{y(1+y) }\,\mathrm{d}y.
\end{equation*}
The polylogarithm function can be expressed as a Fermi-Dirac integral, via the representation~\cite{SriChoi2001}
\begin{equation*}
\mathrm{Li}_{t}(-y)=-\frac{1}{\Gamma (t) }\int_{0}^{\infty }\,
\frac{~x^{t-1}}{y^{-1}e^{x}+1}\,\mathrm{d}x  
\end{equation*}
so that
\begin{align*}
\Ii{-}{1}{1}{0}{t}  =& -\frac{1}{\Gamma (t) }
\int_{0}^{\infty} x^{t-1} \int_{0}^{\infty }\,\frac{~\mathrm{d}y}{
(1+y) (e^{x}+y) }\, \,\mathrm{d}x \\ %
=& -\frac{1}{\Gamma (t) }\int_{0}^{\infty }\,\frac{~x^{t}}{
e^{x}-1}\,\,\mathrm{d}x=-t\,\zeta(t+1).
\end{align*}
If $t$ is even and we substitute the above into~(\ref{INTthm1.4}) we obtain
\begin{equation}
2\,\mathbb{S}_{1,t}^{+-}\bigl( \textstyle \frac{1}{2} \displaystyle \bigr) \;=\; 2\,\mathbb{S}_{1,t}^{+-} +2\eta(t+1) -t\,\zeta(t+1)  -2\eta(t) \log 2 +2\sum\limits_{j=1}^{\frac{t}{2}}\eta(2j)\, \eta (t+1-2j).  \label{q1}
\end{equation}
R. Sitaramachandrarao~\cite[3.17]{SitaR} has given the identity
\begin{equation}
2\,\mathbb{S}_{1,t}^{+-} = (t+1) \eta(t+1) -\zeta(t+1) -2\sum\limits_{j=1}^{\frac{t}{2}-1}\eta(2j)\,
\zeta(t+1-2j).  \label{q2}
\end{equation}
Substituting~(\ref{q2}) into~(\ref{q1}) provides the new linear harmonic
Euler sum identity
\begin{equation}
2\,\mathbb{S}_{1,t}^{+-}\bigl(\textstyle \frac{1}{2}\bigr) =(t+3) \eta(t+1) -(t+1) \zeta(t+1)  
-2^{1-t}\sum\limits_{j=1}^{\frac{t}{2}-1}2^{2j}\eta (2j)\,
\zeta(t+1-2j),\qquad\text{$t$ even}.  \label{q3}
\end{equation}
\end{remark}

\begin{example}
Let $p=2p^{\ast}$ (and rename $p^{\ast }=p$), $t=1$, $q=\frac{a}{b}$ with $ab>0$. Then
\begin{align*}
\sign(b)\,b^{2p+1} \Ii{-}{a}{b}{2p}{1} =&\sign(b)\,b^{2p+1}\!\!\! \int_{-\infty }^{\infty} \frac{x^{2p} ~\mathrm{Li}_{1}(-e^{ax})}{1+e^{bx}}\,\mathrm{d}x \\ 
=&-\biggl[\frac{(2p)!}{q^{2p+1}} +q(2p+1)!\biggr]\eta (2p+2).
\end{align*}
Rearranging, we obtain, for $ab>0$
\begin{equation*}
\eta (2p+2) \;=\; -\frac{\sign(b)\,a^{2p+1}}{(2p)! \bigl(1 +q^{2p+2}(2p+1)\bigr)} \int_{-\infty }^{\infty }\,\frac{x^{2p}~\mathrm{Li}_{{1}}(-e^{ax})}{1+e^{bx}}\,\mathrm{d}x
\end{equation*}
which, in the special case of $a=b=q=1$, we recover the result  
\cite[(1.8) p.~2]{KUMAR}. 

Let $p=2p^{\ast }-1$ (and rename $p^{\ast }=p$), $t=1$, $q=\frac{a}{b}$ with $ab>0$.  Then 
\begin{eqnarray*}
\sign(b)\, b^{2p+1} \Ii{-}{a}{b}{2p-1}{1} &=& \sign(b)\,b^{2p+1} \!\!\int_{-\infty }^{\infty }\,\frac{x^{2p-1}~\mathrm{Li}_{1}(-e^{ax})}{1+e^{bx}}\, \mathrm{d}x \\
&=& \frac{(2p-1) !}{q^{2p}}\, \eta(2p+1) -2(2p-1)!\,\eta(2p)\, \log 2 \\
&&-q(2p)!\, \eta(2p+1) +2(2p-1)! \\
&&\;\;\;\;\;\;\;\;\;\;\;\;\times\bigl(
\mathbb{S}_{2p,1}^{+-}(q) -2^{1-2p}\, \mathbb{S}_{2p,1}^{+-}(\textstyle \frac{q}{2}) \bigr).
\end{eqnarray*}
The case of $a=b=q=t=1$ and $p\in \mathbb{N}$ gives
\begin{equation*}
\Ii{-}{1}{1}{p}{1} \;=\;\int_{-\infty}^{\infty}\, \frac{x^{p} ~\mathrm{Li}_{1}(-e^{ax})}{1+e^{bx}} \,\mathrm{d}x \;=\; \int_{0}^{\infty}\frac{~\log^{p}(y) \mathrm{Li}_{1}(-y)}{y(1+y) }\,\mathrm{d}y.
\end{equation*}
Again utilizing the Fermi-Dirac integral
, we put 
\begin{eqnarray*}
\mathbf{X}(a)  &:=& \int_{0}^{\infty }\frac{~y^{a}\,\mathrm{Li}_{1}(-y)}{y(1+y) }\,\mathrm{d}y,\qquad \vert a\vert <1 \\
&=&-\frac{1}{\Gamma (1) }\int_{0}^{\infty
}x^{1-1}\int_{0}^{\infty }\frac{~y^{a}}{(1+y) (
e^{x}+y) }\,\mathrm{d}y\, \mathrm{d}x \\
&\stackrel{(*)}{=}& \frac{\pi }{\Gamma (1) }\int_{0}^{\infty }\frac{~(
1-e^{ax}) \csc (a\pi) }{(e^{x}-1) }\,\mathrm{d}x \\
&\stackrel{(\dag)}{=}&\pi \csc (a\pi) \bigl(\gamma +\psi (1-a)\bigr).
\end{eqnarray*}
See~\cite[3.223(1), 3.311(5)]{Table-Integrals} for~($*$), ($\dag$) respectively. Therefore
\begin{eqnarray*}
\Ii{-}{1}{1}{p}{1}   &=& \int_{0}^{\infty }\frac{\,\log^{p}(y) \mathrm{Li}_{1}(-y)}{y(1+y)}\,\mathrm{d}y \;=\; \underset{a\rightarrow 0}{\lim }\Bigl(\frac{\partial
^{p}}{\partial a^{p}}\mathbf{X}(a) \Bigr) \\
&=&\underset{a\rightarrow 0}{\lim }\Bigl(\frac{\partial ^{p}}{\partial a^{p}}\bigl(
\pi \csc (a\pi) (\gamma +\psi (1-a))
\bigr) \Bigr)  \\
&=& (-1)\, p!\,\Bigr(\zeta (p+2) +2\sum\limits_{j=1}^{\lfloor \frac{p}{2} \rfloor }\eta (2j) \zeta (p+2-2j) \Bigr) 
\end{eqnarray*}
and this confirms the result obtained by Li and Chu~\cite[p.~572]{LIChu}.

For $a=6$, $b=1$, $p=1$, $t=1$ we obtain the elegant result for $\Ii{-}{6}{1}{1}{1}$\;:
\begin{multline*}
\int_{-\infty }^{\infty }\,\frac{x~\mathrm{Li}_{1}(-e^{6x})}{1+e^{x}}\,
\mathrm{d}x \textstyle = \frac{1}{48}\zeta(3) -\frac{7}{3}\pi G -\zeta(2)\bigl( 4\log \bigl(1 +\sqrt{3}\,) -\frac{5}{4}\log 2\bigr) \\ 
+\frac{\pi}{72\sqrt{3}} \textstyle\Bigl( \psi^{(1)}(\frac{1}{6}) +\psi^{(1)}(\frac{1}{3}) -\psi^{(1)}(\frac{2}{3}) -\psi^{(1)}(\frac{5}{6}) \Bigr),
\end{multline*}
where $G$ is Catalan's constant.
\end{example}

In the next theorem we establish the complementary version of Theorem~\ref{THM1}.

\begin{theorem}
\label{THM2} Let $t \in \mathbb{N}$, $p\in \mathbb{N}\cup\{0\}$ and $a$, $b \in \mathbb{R}$ with $ab>0$.   Define $q = \frac{a}{b}$.  Then, the following hold:

\noindent  If $p+t$ is odd 
\begin{align*}
\frac{\sign(b)}{p!}\, b^{p+1}\,\Ii{+}{a}{b}{p}{t} 
=& \frac{(-1)^{p}}{q^{p+1}}\zeta(p+t+1) \\
&+2\sum_{j=0}^{\lfloor \frac{t}{2}\rfloor} q^{t-2j} \binom{p\!+\!t\!-\!2j}{p}  \eta(p+t+1-2j)\, \zeta(2j) \\
&-q^{t-1}\binom{p\!+\!t\!-\!1}{p} \eta(p+t)\,\pi \, i. 
\end{align*}
If $p+t$ is even
\begin{align}
\frac{\sign(b)}{p!}\, b^{p+1}\,\Ii{+}{a}{b}{p}{t} %
=& \frac{(-1)^{p}}{q^{p+1}}\zeta(p+t+1) \nonumber\\
&+2\sum_{j=0}^{\lfloor \frac{t}{2}\rfloor} q^{t-2j} \binom{p\!+\!t\!-\!2j}{p} \eta(p+t+1-2j)\, \zeta(2j)  \nonumber\\ 
&+2(-1)^{p}\sum_{n=1}^{\infty} \sum_{k=1}^{\infty} \frac{(-1)^{k}}{n^t(qn+k)^{p+1}} \nonumber\\
&-q^{t-1}\binom{p+t-1}{p} \eta(p+t)\,\pi \, i.\label{eq-pplusteven-general}
\end{align}
Furthermore, if $p+t$ is even and $t>1$ then~(\ref{eq-pplusteven-general}) can be written as
\begin{align}
\frac{\sign(b)}{p!}\, b^{p+1}\,\Ii{+}{a}{b}{p}{t} %
=& \frac{(-1)^{p}}{q^{p+1}}\zeta(p+t+1) \nonumber\\
&+2\sum_{j=0}^{\lfloor \frac{t}{2}\rfloor} q^{t-2j} \binom{p\!+\!t\!-\!2j}{p} \eta(p+t+1-2j)\, \zeta(2j)  \nonumber\\ 
&+2(-1)^{p} \Bigl[ \mathbb{S}_{p+1,t}(q) -2^{-p} \,\mathbb{S}_{p+1,t}\bigl( \textstyle \frac{q}{2} \displaystyle\bigr) -\eta(p+1)\,\zeta(t)\Bigr] \nonumber\\
&-q^{t-1}\binom{p+t-1}{p} \eta(p+t)\,\pi \, i. 
\end{align}
\end{theorem}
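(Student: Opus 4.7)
The plan is to run the proof of Theorem~\ref{THM1} almost verbatim, but with two changes: I would use~(\ref{ONE}) with the $+$ sign, and I would apply Jonqui\`ere's \emph{positive} relation~(\ref{eq-Jonq-positive}) in place of~(\ref{eq-Jonq-negative}). First I would substitute $y = e^{bx}$ in $\Ii{+}{a}{b}{p}{t}$, split the resulting integral on $(0,\infty)$ as $\int_0^1 + \int_1^{\infty}$, and apply the inversion $y\mapsto 1/y$ to the tail. After multiplying through by $\sign(b)\,b^{p+1}$ and writing $\mathbf{J}^+ := \sign(b)\,b^{p+1}\, \Ii{+}{a}{b}{p}{t}$, this produces
\[
\mathbf{J}^+ = \int_{0}^{1} \frac{\log^{p}y\;\mathrm{Li}_t(y^q)}{y(1+y)}\,\mathrm{d}y + (-1)^p \int_{0}^{1} \frac{\log^{p}y\;\mathrm{Li}_t(y^{-q})}{1+y}\,\mathrm{d}y.
\]
The same partial fraction identity $\frac{1}{y(1+y)} = \frac{1}{y} + \frac{(-1)^{p+t}}{1+y} - \frac{1+(-1)^{p+t}}{1+y}$ groups the two polylogarithm terms into the combination $(-1)^{p+t}\bigl[\mathrm{Li}_t(y^q) + (-1)^t \mathrm{Li}_t(y^{-q})\bigr]/(1+y)$, leaves an isolated $\mathrm{Li}_t(y^q)/y$ piece, and produces a correction $-(1+(-1)^{p+t})\,\mathbf{K}^+(0,p,q,t)$ that vanishes when $p+t$ is odd.

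Next I would apply~(\ref{eq-Jonq-positive}) with $x = y^q$ to the grouped combination. Integrating the resulting finite $\zeta(2j)$-sum against $\log^p y/(1+y)$ via~(\ref{eq-logint}) produces $2\sum_j q^{t-2j}\binom{p+t-2j}{p}\eta(p+t+1-2j)\,\zeta(2j)$ (with $\zeta(2j)$ replacing the $\eta(2j)$ of Theorem~\ref{THM1} because~(\ref{eq-Jonq-positive}) carries $\zeta$ rather than $\eta$); the extra imaginary summand $i\pi\log^{m-1}(x)/(m-1)!$ in~(\ref{eq-Jonq-positive}) --- which has no counterpart in~(\ref{eq-Jonq-negative}) --- contributes, after integration by~(\ref{eq-logint}), precisely the term $-q^{t-1}\binom{p+t-1}{p}\eta(p+t)\,\pi i$. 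The bare integral $\int_0^1 \log^p y\,\mathrm{Li}_t(y^q)/y\,\mathrm{d}y$ equals $\mathbf{K}^+(-1,p,q,t)+\mathbf{K}^+(0,p,q,t)$, which telescopes via~(\ref{eq-difference-harmonic}) exactly as in Theorem~\ref{THM1} but with non-alternating summation, yielding $(-1)^p\,p!\,\zeta(p+t+1)/q^{p+1}$ ($\zeta$ in place of $\eta$ because there is no $(-1)^n$ factor).

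It remains to handle $-2\,\mathbf{K}^+(0,p,q,t)$ in the $p+t$ even case. Expanding $\mathrm{Li}_t(x^q)/(1+x)$ as an absolutely convergent double series and integrating termwise against $\log^p x$ gives $(-1)^{p+1}p!\sum_{n,k\geq 1}(-1)^{k+1}/(n^t(qn+k)^{p+1})$; doubling and dividing by $p!$ produces exactly the double-sum term in~(\ref{eq-pplusteven-general}). To convert this into the $\mathbb{S}_{p+1,t}$ closed form I would apply~(\ref{eq-K}) at $c=0$ together with the doubling identity~(\ref{eq-doubling}) at $\lambda = qn/2$ to eliminate $H^{(p+1)}_{(qn-1)/2}$ in favour of $H^{(p+1)}_{qn/2}$ and $H^{(p+1)}_{qn}$, obtaining the combination $\mathbb{S}_{p+1,t}(q) - 2^{-p}\,\mathbb{S}_{p+1,t}(q/2) - \eta(p+1)\,\zeta(t)$. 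The main delicate point is precisely here: when $t=1$ each of $\mathbb{S}_{p+1,1}(q)$ and $\eta(p+1)\zeta(t)$ is individually divergent although their difference converges; this is the reason for the hypothesis $t>1$ on the $\mathbb{S}_{p+1,t}$-form, while~(\ref{eq-pplusteven-general}) remains the uniformly valid convergent statement.
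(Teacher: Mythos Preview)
Your approach is exactly the paper's: it proves Theorem~\ref{THM2} by rerunning the proof of Theorem~\ref{THM1} with the positive Jonqui\`ere relation~(\ref{eq-Jonq-positive}) and $\mathbf{K}^{+}$ in place of~(\ref{eq-Jonq-negative}) and $\mathbf{K}^{-}$, and then observes that the decomposition into separate $\mathbb{S}$-sums fails when $t=1$ because the individual series diverge. Your write-up has one small sign slip in the direct evaluation of $\mathbf{K}^{+}(0,p,q,t)$ (it should be $(-1)^{p}p!\sum_{n,k\ge 1}(-1)^{k+1}/(n^{t}(qn+k)^{p+1})$, not $(-1)^{p+1}p!\,\cdots$), but after multiplying by $-2/p!$ this lands on the stated double sum and the argument is otherwise correct.
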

In all cases, all the terms are real except the last.

\begin{proof} The proof is identical to the proof of Theorem~\ref{THM1}, except for two adjustments for sign.  Namely, we use the positive version of Jonqui\`{e}re's relation~(\ref{eq-Jonq-positive}) instead of the negative version~(\ref{eq-Jonq-negative}), and $\mathbf{K}^{+}$ in~(\ref{eq-K}) instead of $\mathbf{K}^{-}$.

There is one complication: the simplification of the harmonic sums arising from~(\ref{eq-K}) cannot be performed if $t=1$ since the convergent series (whose summands are differences) does not split into (the difference of) several convergent series.  See Example~\ref{ex-problem-case} below.  Instead, to state~(\ref{eq-pplusteven-general}) we have just appealed to the definition of generalized harmonic numbers~(\ref{G-HZ}).
\end{proof}

\begin{example}\label{ex-problem-case}
In the $\mathbf{I}^+$ case, if $q=2$ and $p=t=1$ the sum arising from~(\ref{eq-K}) is

\[
S = -\frac{1}{2}\sum\limits_n \frac{1}{n}\Bigl[H_{n-\frac{1}{2}}^{(2)} -H_{n}^{(2)} \Bigr]%
%
%
\;\;\stackrel{(\ref{eq-doubling})}{=}\;\; \sum\limits_n \frac{1}{n}\Bigl[H_{n}^{(2)} -2H_{2n}^{(2)} +\zeta(2) \Bigr]. %
\]
Of course we cannot split the right hand side into three convergent sums.  However by Abel summation we find
\[
S= 3\zeta(3)-\frac{1}{2}\pi^2\log(2).
\]

Altogether
\[
12\int_{-\infty}^{\infty} \frac{x\log(1-e^{2x})}{1+e^x}\mathrm{d}x  = 3\zeta(3) +6\pi^2\log(2) +\pi^3\,i.
\]
\end{example}

\medskip

Theorems~\ref{THM1} and~\ref{THM2} have a similar appearance.  At the cost of a little complexity, they can be combined into the single statement, Theorem~\ref{THM3} below.

\medskip

\noindent \emph{In the next result, either every $\pm$ sign is to be interpreted as $+$, or every $\pm$ sign is interpreted as $-$.}
\begin{theorem}\label{THM3}
Let $a$, $b\in \mathbb{R}$ with $ab>0$, and let $p \in \mathbb{N}\cup\{0\}$ and $p \in \mathbb{N}$.  Define $q=a/b$ and let $\xi^{+}(z)=\zeta(z)$ and $\zeta^{-}(z)=\eta(z)$.
Then 
\[
\Ii{\pm}{a}{b}{p}{t} = p!\,\frac{\sign(b)}{b^{p+1}}\,\Bigl[A^{\pm}+ B^{\pm}  \;+\; C^{\pm} i\Bigr],
\]
where $A^{\pm}$, $B^{\pm}$, $C^{\pm}$ are the real numbers given below:
\begin{eqnarray}
A^{\pm} &\!=\!& \pm \frac{(-1)^{p}}{q^{p+1}}\xi^{\pm}(p+t+1) \;\;\pm 2\sum_{j=0}^{\lfloor \frac{t}{2}\rfloor} q^{t-2j} \binom{p\!+\!t\!-\!2j}{p}  \eta(p+t+1-2j)\, \xi^{\pm}(2j), \notag \\ %
B^{\pm} &\!=\!& \bigl[(-1)^{p} \!+\!(-1)^t\bigr] \sum_{n=1}^{\infty}(\pm 1)^{n} \sum_{k=1}^{\infty} \frac{(-1)^{k}}{n^t(qn+k)^{p+1}}, \label{eq-safe-case} \\[1mm] %
&\!=\!& (\pm)\bigl[(-1)^{p} \!+\!(-1)^t\bigr]\Bigl[ \mathbb{S}_{p+1,t}^{+\pm}(q) -2^{-p} \,\mathbb{S}_{p+1,t}^{+\pm}\bigl( \textstyle \frac{q}{2} \bigr) -\eta(p+1)\,\xi^{\pm}(t)\Bigr] \quad \text{if\ }(t,\pm) \neq (1,+),\nonumber\\ \label{eq-t-greater-1} \\ %
C^{+} &\!=\!&-q^{t-1}\binom{p\!+\!t\!-\!1}{p}\, \eta(p+t)\,\pi, \notag \\ %
C^{-} &\!=\!& 0. \notag
\end{eqnarray}
\end{theorem}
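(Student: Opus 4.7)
The plan is to observe that Theorem~\ref{THM3} is essentially a parallel repackaging of Theorems~\ref{THM1} and~\ref{THM2}, so the proof amounts to carrying out a single calculation while tracking the two signs $\pm$ simultaneously. The key notational device is that $\eta$ arises naturally in the $-$ case from Jonqui\`{e}re's negative relation~(\ref{eq-Jonq-negative}), while $\zeta$ arises in the $+$ case from the positive relation~(\ref{eq-Jonq-positive}); introducing $\xi^{+}=\zeta$ and $\xi^{-}=\eta$ lets one write both proofs in a single formula, modulo an extra imaginary contribution in the positive case.

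First I would redo the opening moves of the proof of Theorem~\ref{THM1}, keeping $\pm$ throughout: substitute $y=e^{bx}$, split $\int_{-\infty}^{\infty}=\int_0^1+\int_1^{\infty}$, apply $y\mapsto 1/y$ to the second integral, and combine using the identity $\frac{1}{y(1+y)} = \frac{1}{y}+\frac{(-1)^{p+t}}{1+y}-\frac{1+(-1)^{p+t}}{1+y}$. This manipulation is indifferent to whether the integrand involves $\mathrm{Li}_t(y^q)$ or $\mathrm{Li}_t(-y^q)$, and produces the $\pm$-analogue of equation~(\ref{eq-StorySoFar}).

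Next I would apply the appropriate Jonqui\`{e}re identity, noting that both~(\ref{eq-Jonq-positive}) and~(\ref{eq-Jonq-negative}) have the same structural form for their real parts, with $\eta(2j)$ replaced by $\zeta(2j)$ in the positive case. Combining with the log-integral formula~(\ref{eq-logint}) and with~(\ref{eq-first-integral}) (adapted by using $\mathbf{K}^{+}$ instead of $\mathbf{K}^{-}$) produces $A^{\pm}$. The crucial asymmetry is the extra $i\pi\log^{m-1}(x)/(m-1)!$ summand appearing only in~(\ref{eq-Jonq-positive}); paired against $\int_0^1 \log^{p+t-1}(y)/(1+y)\,\mathrm{d}y$ via~(\ref{eq-logint}) this yields exactly $C^{+}$, whereas $C^{-}=0$ trivially.

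Finally, the middle integral $\mathbf{K}^{\pm}(0,p,q,t)$ produces $B^{\pm}$. Expanding via~(\ref{eq-K}) and then applying the doubling relation~(\ref{eq-doubling}) rewrites each difference of extended harmonic numbers as a sum $\sum_k (-1)^k/(qn+k)^{p+1}$, yielding the always-valid absolutely convergent double-sum form~(\ref{eq-safe-case}). To recast this as the Euler-sum expression~(\ref{eq-t-greater-1}), one splits the bracket $[2^{p+1}(H_{qn}^{(p+1)}-\eta(p+1)) - 2 H_{qn/2}^{(p+1)}]$ into three separately summed pieces against $(\pm 1)^{n+1}/n^t$. The main obstacle, and the reason for the side condition $(t,\pm)\neq (1,+)$, is precisely here: in the $+$ case with $t=1$ the individual series $\sum H_{qn}^{(p+1)}/n$ and $\sum 1/n$ diverge, so the split is illegitimate even though the original double sum converges; Example~\ref{ex-problem-case} illustrates that the correct value can still be extracted (by Abel summation) without going through separated Euler sums. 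In all remaining cases the three pieces converge, the split is legal, and one reads off $B^{\pm}$ in the form~(\ref{eq-t-greater-1}), completing the combined statement.
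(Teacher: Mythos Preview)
Your proposal is correct and follows essentially the same approach as the paper. The paper does not give a separate proof of Theorem~\ref{THM3}: it is simply presented as a unified restatement of Theorems~\ref{THM1} and~\ref{THM2}, whose proofs are exactly the computation you outline (substitution, split, $y\mapsto 1/y$, Jonqui\`{e}re, then the $\mathbf{K}^{\pm}$ evaluation with the doubling identity), and the paper explicitly flags the same $(t,\pm)=(1,+)$ obstruction to splitting the harmonic-sum series that you identify.
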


Notes:
\begin{itemize}
\item In the $I^-$ case the integral is real.  In the $I^+$ case it has a single imaginary term $C^+i$.

\item The initial term in $A^{\pm}$ can be viewed as a term in the succeeding sum with index ``$j=\frac{1}{2}(p+t+1)$''.

\item If $p+t$ is odd then $B^{\pm}=0$.

\item The sums in~(\ref{eq-t-greater-1}) do not exist if $t=1$  However~(\ref{eq-safe-case}) still gives $B^{\pm}$ in this case.
\end{itemize}

The special cases of $p=0$ and $p=t$ are highlighted in the next corollary.

\begin{corollary}
\noindent  If $p=0$ and $t$ is odd 
\begin{align*}
|b|\,\Ii{+}{a}{b}{0}{t} 
=&|b| \!\!\int_{-\infty }^{\infty } \,\frac{\mathrm{Li}_{t}(e^{ax})}{1+e^{bx}} \,\mathrm{d}x = \frac{1}{q}\zeta(t+1)  \;\;+ 2\sum_{j=0}^{\lfloor \frac{t}{2}\rfloor} q^{t-2j} \eta(t+1-2j)\, \zeta(2j)\\
&-q^{t-1}\eta(t)\,\pi \, i.
\end{align*}

If $p=0$ and $t$ is even
\begin{multline*}
|b|\,\Ii{+}{a}{b}{0}{t} 
=|b| \!\!\int_{-\infty }^{\infty } \,\frac{\mathrm{Li}_{t}(e^{ax})}{1+e^{bx}} \,\mathrm{d}x = \frac{1}{q}\zeta(t+1)  \;\;+ 2\sum_{j=0}^{\lfloor \frac{t}{2}\rfloor} q^{t-2j} \eta(t+1-2j)\, \zeta(2j)\;\; \\ %
+2\Bigl[ \mathbb{S}_{1,t}(q) -\,\mathbb{S}_{1,t}\bigl( \textstyle \frac{q}{2} \displaystyle\bigr) -\zeta(t)\log 2\Bigr] %
-q^{t-1}\eta(t)\,\pi \, i.
\end{multline*}

If $p=t \in \mathbb{N}$ is even, then
\begin{multline*}
\frac{\sign(b)}{t!}\, b^{t+1}\,\Ii{+}{a}{b}{t}{t} %
=\frac{\sign(b)\, b^{t+1}}{t!} \!\!\int_{-\infty }^{\infty } \,\frac{x^{t} ~\mathrm{Li}_{t}(e^{ax})}{1+e^{bx}} \,\mathrm{d}x \\
= \frac{(-1)^{t}}{q^{t+1}}\zeta(2t+1) \;\;+ 2\sum_{j=0}^{\lfloor \frac{t}{2}\rfloor} q^{t-2j} \binom{2t\!-\!2j}{t} \eta(2t+1-2j)\, \zeta(2j)  \\ %
+2\Bigl[ \mathbb{S}_{t+1,t}(q) -2^{-t} \,\mathbb{S}_{t+1,t}\bigl( \textstyle \frac{q}{2} \displaystyle\bigr) -\eta(t+1)\,\zeta(t)\Bigr] 
\quad -q^{t-1}\binom{2t-1}{t} \eta(2t)\,\pi \, i.
\end{multline*}

\end{corollary}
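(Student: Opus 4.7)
The plan is to apply Theorem~\ref{THM3} directly with the ``$+$'' choice of signs and the specified values of $p$ and $t$, then perform routine simplification of sign factors, binomial coefficients, and degenerate function values. Each of the three cases is a substitution exercise.

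For $p=0$ and $t$ odd, the middle term satisfies $B^+=0$ by the third bullet following Theorem~\ref{THM3} (since $p+t$ is odd). In $A^+$ one has $(-1)^p/q^{p+1}=1/q$ and $\binom{t-2j}{0}=1$, while $C^+=-q^{t-1}\binom{t-1}{0}\eta(t)\pi=-q^{t-1}\eta(t)\pi$. The global prefactor is $p!\,\sign(b)/b^{p+1}=\sign(b)/b$, and multiplying by $|b|$ uses $|b|\,\sign(b)/b=1$, giving the first identity.

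For $p=0$ and $t$ even, we have $(-1)^p+(-1)^t=2$ and $t\geq 2$, so the closed form~(\ref{eq-t-greater-1}) for $B^+$ applies (the forbidden pair $(t,\pm)=(1,+)$ is avoided). With $2^{-p}=1$, $\xi^{+}=\zeta$, and $\eta(p+1)=\eta(1)=\log 2$, this collapses to $B^+=2\bigl[\mathbb{S}_{1,t}(q)-\mathbb{S}_{1,t}(q/2)-\zeta(t)\log 2\bigr]$; the terms $A^+$ and $C^+$ simplify exactly as in the odd case, and again multiplying by $|b|$ gives the second identity. For $p=t\in\mathbb{N}$ even, both $(-1)^p$ and $(-1)^t$ equal~$1$, so $(-1)^p+(-1)^t=2$; the binomials specialize as $\binom{p+t-2j}{p}=\binom{2t-2j}{t}$ and $\binom{p+t-1}{p}=\binom{2t-1}{t}$, the factor $2^{-p}=2^{-t}$ appears in $B^+$, and the prefactor becomes $\sign(b)\,b^{t+1}/t!$, yielding the third identity after substitution.

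There is no genuine obstacle; the only care required is bookkeeping of the sign factors $(-1)^p$ and $(-1)^t$, the degenerate evaluations $\binom{m}{0}=1$, $\eta(1)=\log 2$, $2^0=1$, and verification that $t>1$ whenever the closed form~(\ref{eq-t-greater-1}) of $B^+$ is invoked (this holds because $t$ is even and hence at least $2$ in both cases that use it).
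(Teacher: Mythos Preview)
Your proposal is correct and takes essentially the same approach as the paper, which simply states that the result follows directly from Theorem~\ref{THM2}; your use of Theorem~\ref{THM3} with the ``$+$'' choice is equivalent. The bookkeeping you outline (vanishing of $B^{+}$ for odd $p+t$, the specialization $\eta(1)=\log 2$, the identity $|b|\,\sign(b)/b=1$, and the check that $t\geq 2$ whenever~(\ref{eq-t-greater-1}) is invoked) is exactly what is needed.
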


\begin{proof}
The result follows directly from Theorem~\ref{THM2}.
\end{proof}

\begin{example}
Let $p=2p^{\ast}$ (and rename $p^{\ast }=p$), $t=1$, $q=\frac{a}{b}$, where $a$, $b >0$.  Then the following identity holds true:
\[
\frac{|b|^{2p+1}}{(2p)!} \Ii{+}{a}{b}{2p}{1} 
= \frac{1}{q^{2p+1}}\zeta(2p+2)  \;-q(2p+1)(1-2^{-1-2p})\zeta(2p+2)
 \;-\eta(2p+1)\,\pi \, i.
\]
Isolating the imaginary and real parts of the integral we obtain the
intriguing identities:
\[
\eta(2p+1) =\frac{|b|^{2p+1}}{\pi (2p)!}\, \Im\! \left(\,\int_{-\infty}^{\infty} \frac{x^{2p}\,\log(1-e^{ax})}{1+e^{bx}}\,\mathrm{d}x\!\right).
\]
and 
\[
\zeta(2p+2) = -\frac{|a|^{2p+1}}{(2p)![1- q^{2p+2}(2p+1)(1-2^{-1-2p})]}   \Re\! \left(\,\int_{-\infty}^{\infty} \frac{x^{2p}\,\log(1-e^{ax})}{1+e^{bx}}\,\mathrm{d}x\!\right).
\]
\end{example}

\begin{example}
Let $a=2$, $b=1$, $t=4$ and $p=2$.  On application of Theorem~\ref{THM2} the integral $\Ii{+}{2}{1}{2}{4}$ evaluates to a sum of $\zeta$ (or $\eta$) terms, together with $\mathbb{S}_{3,4}$ and  $\mathbb{S}_{3,4}(2)$. The evaluation 
\[
\mathbb{S}_{3,4}=18\zeta(7)-10\zeta(2)\zeta(5)
\]
is due to Borwein~\cite{Borwein}, \cite[Thm~3.1]{FlajSalv}. From Eq. (\ref{FlajSal3}), one has
\begin{equation*}
\mathbb{S}_{3,4}(2)=\sum_{n=1}^{\infty}\frac{H_{2n}^{(3)}}{n^4},
\end{equation*}
and we obtain by decomposition
\begin{equation*}
\mathbb{S}_{3,4}(2)=8\mathbb{S}_{3,4}-8\mathbb{S}_{3,4}^{+-}.
\end{equation*}
From \cite[Thm~7.2, p. 33]{FlajSalv}, we may evaluate
\begin{equation*}
\mathbb{S}_{3,4}^{+-}(2) = \frac{363}{128}\zeta(7) - \frac{9}{8}\zeta(5)\zeta(2).    
\end{equation*}
thereby producing
\begin{equation*}
\mathbb{S}_{3,4}(2) = \frac{1941}{16}\zeta(7) -71\zeta(5)\zeta(2).
\end{equation*}
Upon simplifying we obtain:
\begin{equation*}
\int_{-\infty }^{\infty}\, \frac{x^{2} ~\mathrm{Li}_{4}(e^{2x})}{1+e^{x}} \,\mathrm{d}x = -5\zeta(7) -184\zeta(2)\zeta(5) \;-\frac{31}{189}\pi^7\,i.
\end{equation*}

\end{example}

\section{Concluding Remarks}

We have successfully evaluated the Integral~(\ref{ONE}), shown previously to
have occurred in the course of a work on statistical plasma physics, in the
so-called Sommerfeld temperature-expansion of the electronic entropy. The
representation of both the Riemann zeta and Dirichlet eta functions have
been given in terms of two independent parameters. The new linear harmonic
Euler sum identity~(\ref{q3}) has been identified. We have also generalized
the results of Li and Chu~\cite{LIChu}.

\end{document}